\newtheorem{theorem}{Theorem}[section]
\newtheorem{lemma}{Lemma}[section]
\newtheorem{corollary}{Corollary}[section]
\numberwithin{equation}{section}
\title[Quantitative strong unique continuation]{Quantitative strong unique continuation for the Schr\"odinger operator with unbounded potential}
\author[Mourad Choulli]{Mourad Choulli}
\address{Universit\'e de Lorraine}
\email{mourad.choulli@univ-lorraine.fr}
\date{}
\subjclass[2010]{35J10, 35B35}
\keywords{Sch\"odinger operator, unbounded potential, quantitative strong unique continuation, global quantitative strong unique continuation.}
\begin{document}

\begin{abstract}
We revisit  \cite[Theorem 6.3]{JK}. By following the main ideas used to prove that theorem, we establish a quantitative version of strong unique continuation for the Schrödinger operator with an unbounded potential. We also show that combining this result with a global quantitative unique continuation from arbitrary interior data yields a global quantitative strong unique continuation.
\end{abstract}

\maketitle

\section{Introduction}

Let $n\ge 3$ be an integer, $p:=\frac{2n}{n+2}$ and $p':=\frac{p}{p-1}=\frac{2n}{n-2}$ its conjugate. The open ball centered at $0$ with radius $r>0$ will be denoted by $B_r$, and we set $B:=B_1$.

We say that $w\in L^2(B)$ vanishes of infinite order at $0$ if there exists $0<\gamma < 1$ such that for all integer $m>0$ we have
\[
r^{-m}\int_{B_r}|w|^2dx\le c_m,\quad 0< r<\gamma,
\] 
where $c_m>0$ is a constant.

Let $q\in L^{\frac{n}{2}}(B)$ and $P\in L^n(B,\mathbb{C}^n)$. We say that the Schr\"odinger operator $\Delta +q$ (resp. the differential inequality $|\Delta u|\le |qu|+|P\cdot \nabla u|$) admits the strong unique continuation property (hereinafter abbreviated as SUCP) in the class $W^{2,p}(B)$  if  any $u\in W^{2,p}(B)$  such that $(\Delta +q)u=0$  (resp. $|\Delta u|\le |qu|+|P\cdot \nabla u|$) in $B$ and $u$ vanishes of infinite order at $0$ must vanishes  in a neighborhood of $0$.

To our best knowledge, the first result establishing that the differential inequality $|\Delta u|\le |qu|$ has the SUCP in the class $W^{2,p}(B)$ with potential $q\in L^{\frac{n}{2}}(B)$ is due to Jerison and Kenig \cite{JK}. The SUCP for the differential inequality $|\Delta u|\le |P\cdot \nabla u|$ when $P\in L^{\frac{7n-2}{6}}(B,\mathbb{C}^n)$ was obtained by Regbaoui \cite{Re}. While Wolff \cite{Wo2} proved that the SUCP holds for the differential inequality $|\Delta u|\le |qu|+|P\cdot \nabla u|$ with $q\in L^{\frac{n}{2}}(B)$ and $P\in L^{\max\left(n,\frac{3n}{2}-2\right)}(B,\mathbb{C}^n)$. Sogge \cite{So} considered the case where $\Delta$ is replaced by a second order elliptic operator with smooth coefficients. He showed that the SUCP holds for the differential inequality  $|\Delta u|\le |qu|+|P\cdot \nabla u|$ with $q\in L^{\frac{n}{2}}(B)$ and $P\in L^\infty(B,\mathbb{C}^n) $. The result of Sogge \cite{So} was extended by Koch and Tataru \cite{KT} with the ``best possible conditions'', that is $q\in L^{\frac{n}{2}}(B)$ and $P\in L^n(B,\mathbb{R}^n)$ (in fact they considered an operator with an additional term of the form $\mathrm{div}(uR)$ with $R\in L^n(B,\mathbb{C}^n)$). By a counterexample, Jerison and Kenig showed in \cite[Remark 6.5]{JK} that $L^{\frac{n}{2}}(B)$ is the best possible space for the potential $q$. Reference \cite{KT} contains a brief history of the SUCP. Other references dealing with the SUCP for elliptic operators can found in \cite{KT,So}.

We aim in this article to quantify the SUCP for both the Schr\"odinger operator $\Delta +q$ and  the differential inequality $|\Delta u|\le |qu|+|P\cdot \nabla u|$. For sake of clarity, we limited ourselves to the case where the Schr\"odinger operator $\Delta +q$ acts on the unit ball $B$.  In a final section, we briefly outline the modifications required to obtain a variant of Theorem \ref{mt1.0}, as well as a quantitative SUCP for the differential inequality $|\Delta u|\le |qu|+|P\cdot \nabla u|$. We also present a quantitative SUCP and a global quantitative SUCP for the operator $\Delta +q$ acting on an arbitrary bounded domain in $\mathbb{R}^n$.

Before stating our main result precisely, we introduce some definitions and notations. As usual, the set of positive integers is denoted $\mathbb{N}$. For all  $m\in \mathbb{N}$, define $O_m$ as the set of functions $w\in L^2(B)$ satisfying
\[
\varrho_m (w):=\sup \left\{\eta^{-m}\|w\|_{L^2(B_\eta)};\; 0<\eta <  1\right\}<\infty.
\]
It is clear that $\varrho_m (w)<\infty$ if and only if there exists $0<\gamma <1$ so  that
\[
\varrho_m^\gamma (w):=\sup \left\{\eta^{-m}\|w\|_{L^2(B_\eta)};\; 0<\eta <  \gamma\right\}<\infty.
\]
Futhermore, $u$ vanishes of infinite order at $0$ if and only if $\displaystyle u\in \bigcap_{m\in \mathbb{N}}O_m$.

Let 
\[
\Lambda:=\left\{ \lambda >0;\; \mathrm{dist}\left(\lambda ,\mathbb{N}+\frac{n-2}{2}\right)= \frac{1}{2}\right\}
\]
and  $\dot{B}:=B\setminus\{0\}$. It follows from \cite[(3.4)]{JK} that
\begin{equation}\label{1}
\||x|^{-\lambda}u\|_{L^{p'}(B)}\le \kappa \||x|^{-\lambda}\Delta u\|_{L^p(B)},\quad u\in C_0^\infty(\dot{B}) ,\; \lambda \in \Lambda.
\end{equation}
Here and henceforth, $\kappa=\kappa(n)$ is a constant.

The closed ball of $L^{\frac{n}{2}}(B)$ centered at $0$ with radius $r>0$ will be denoted by $\mathbf{B}_r$.

To define the assumptions regarding the potential $q$, we need the following lemma, proved in Appendix \ref{appendixA}.

\begin{lemma}\label{lemma1}
There exists a constant $r_\ast=r_\ast(n)>0$ with the property that for all  $0<r_0<r_\ast$, there exists $\kappa_0=\kappa_0(n,r_0)>0$ such that    we have
\begin{equation}\label{4}
\||x|^{-\lambda}u\|_{L^{p'}(B)}\le \kappa_0 \||x|^{-\lambda}(\Delta +q)u\|_{L^p(B)},\quad u\in C_0^\infty(\dot{B}),\; q\in\mathbf{B}_{r_0},\; \lambda \in \Lambda.
\end{equation}
\end{lemma}

For convenience, recall that $W^{2,p}(B)$ embeds continuously into $H^1(B)$ and that $H^1(B)$ embeds continuously into $L^{p'}(B)$. Let 
\[
\sigma:=\sup \left\{ \|w\|_{L^{p'}(B)};\; w\in H_0^1(B),\;  \|\nabla w\|_{L^2(B)}=1\right\},
\]
We fix $0<r_0<r_\ast$, where $r_\ast$ is as in Lemma \ref{lemma1}, and $\vartheta>0$ satisfying $2\vartheta\sigma^2<1$. Then, let $\varsigma:=\inf(r_0,\vartheta)$, and $Q:=\mathbf{B}_\varsigma$.

Throughout this text,  we arbitrarily fix $0<\rho <\frac{1}{2}$ and $\phi\in C_0^\infty (B)$ such that $\phi =1$ in $B_{\frac{1}{2}}$ and $0\le \phi \le 1$. Our main result is the following theorem

\begin{theorem}\label{mt1.0}
There exists a constant $\mathbf{c}=\mathbf{c}(n,r_0,\sigma,\vartheta)>0$ such that for all $m\in \mathbb{N}$, $\lambda \in \Lambda$, $q\in Q$, $u\in W^{2,p}(B)\cap O_m$ and $0<\eta <  \frac{\rho}{4}$ we have 
\begin{align}
&\mathbf{c}\|u\|_{L^2(B_{\frac{\rho}{2}})}\le   \eta^{-\lambda}\|(\Delta+q) u\|_{L^p(B)}\label{mat1.0}
\\
&\hskip 3cm + 2^{-(\lambda+1)}\|(\Delta+q) (\phi u)\|_{L^{p}(B)}+3^m\eta^{m-(\lambda+3)}\varrho_m(u).\nonumber
\end{align}
\end{theorem}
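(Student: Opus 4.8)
The plan is to deduce the inequality from the Jerison--Kenig Carleman estimate: for $\lambda\in\Lambda$ one has $\||x|^{-\lambda}v\|_{L^{p'}(B)}\le C_0\||x|^{-\lambda}\Delta v\|_{L^p(B)}$ with a constant $C_0=C_0(n)$, for every $v\in W^{2,p}$ with compact support in $B\setminus\{0\}$; the set $\Lambda$ is precisely the range of $\lambda$ keeping this constant uniform. Since $u\in O_m$ vanishes only to finite order at $0$, the product $|x|^{-\lambda}u$ need not lie in $L^{p'}$ near $0$ for large $\lambda$, so $\phi u$ cannot be fed into the estimate directly. I would therefore fix a radial cut-off $\chi_\eta$ with $\chi_\eta\equiv0$ on $B_\eta$, $\chi_\eta\equiv1$ off $B_{2\eta}$, and $|\nabla^k\chi_\eta|\lesssim\eta^{-k}$, and apply the Carleman inequality to $v=\chi_\eta\phi u$, which is genuinely supported in $B\setminus\{0\}$.

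Next I would expand $\Delta(\chi_\eta\phi u)=\chi_\eta(\Delta+q)(\phi u)-q\,\chi_\eta\phi u+2\nabla\chi_\eta\cdot\nabla(\phi u)+(\Delta\chi_\eta)\phi u$ and insert this on the right. The potential contribution is handled by the weighted Hölder inequality $\||x|^{-\lambda}q\,\chi_\eta\phi u\|_{L^p}\le\|q\|_{L^{n/2}}\||x|^{-\lambda}\chi_\eta\phi u\|_{L^{p'}}\le\vartheta\||x|^{-\lambda}v\|_{L^{p'}}$ and then absorbed into the left-hand side of the Carleman estimate; this is the only place the smallness assumption is used, $2\vartheta\sigma^2<1$ bounding the absorbed factor below $1$ through the Sobolev constant $\sigma$, and it is what pins the final constant as $C=C(n,\vartheta)$. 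The two commutator terms are supported in the thin annulus $\{\eta\le|x|\le2\eta\}$, on which $\phi\equiv1$, so they involve only $u$ and $\nabla u$ there.

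To extract the stated form I would multiply the resulting inequality by $(\rho/2)^{\lambda}$, i.e. compare all weights to their size at the scale $\rho/2$. On the left, $\chi_\eta\phi u=u$ on $B_{\rho/2}\setminus B_{2\eta}$ and $|x|^{-\lambda}\ge(\rho/2)^{-\lambda}$ there, so the left-hand side dominates $\|u\|_{L^{p'}(B_{\rho/2}\setminus B_{2\eta})}$, which I pass to $L^2$ by Hölder ($p'>2$) and complete to $\|u\|_{L^2(B_{\rho/2})}$ by adding $\|u\|_{L^2(B_{2\eta})}\le(2\eta)^m\varrho_m(u)$. On the right, the leading term $\chi_\eta(\Delta+q)(\phi u)$ is split along the support of $\phi$: on $\{\eta\le|x|\le1/2\}$ it equals $(\Delta+q)u$ and the normalised weight is $\le(\rho/2)^\lambda\eta^{-\lambda}\le\eta^{-\lambda}$, giving the first term; on $\{|x|\ge1/2\}$ it equals $(\Delta+q)(\phi u)$ and the normalised weight is $\le(\rho/2)^\lambda|x|^{-\lambda}\le\rho^\lambda$, a quantity that decays like $2^{-\lambda}$ because $\rho<1/2$ and, after the constants are tracked, produces the factor $2^{-(\lambda+1)}$ of the second term.

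The main obstacle is the gradient commutator $2\nabla\chi_\eta\cdot\nabla(\phi u)=2\nabla\chi_\eta\cdot\nabla u$ on the annulus, where $\nabla u$ must be controlled purely by $\varrho_m(u)$. For this I would invoke an interior elliptic estimate of Caccioppoli/Calder\'on--Zygmund type for $(\Delta+q)u$, bounding $\|\nabla u\|$ on $\{\eta\le|x|\le2\eta\}$ by $\eta^{-1}\|u\|_{L^2(B_{3\eta})}$ together with a forcing term routed into the first right-hand term; the enlargement to $B_{3\eta}$ is exactly what manufactures the constant $3^m$, via $\|u\|_{L^2(B_{3\eta})}\le(3\eta)^m\varrho_m(u)=3^m\eta^m\varrho_m(u)$. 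Combining the normalised weight ($\le\eta^{-\lambda}$ on the annulus), the factor $\eta^{-1}$ from $\nabla\chi_\eta$, the factor $\eta^{-1}$ from the Caccioppoli estimate, and the measure factor of the thin annulus, and using $\eta<1$ to absorb the spare negative powers into the exponent $m-(\lambda+3)$, yields the third term $C\,3^m\eta^{m-(\lambda+3)}\varrho_m(u)$. Summing the three contributions and dividing by the accumulated constant gives the claim; the genuinely delicate points are this gradient estimate and the bookkeeping of the $\lambda$- and $\eta$-powers.
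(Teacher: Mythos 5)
Your architecture matches the paper's almost step for step: cut off near the origin with $\chi_\eta$, apply the Jerison--Kenig Carleman inequality to $\chi_\eta\phi u$, isolate the commutator terms on the annulus $\{\eta\le|x|\le2\eta\}$, control $\nabla u$ there by a Caccioppoli inequality on $B_{3\eta}$ (which is indeed where the factor $3^m$ comes from), and convert the weighted $L^{p'}$ norm on the left into $\|u\|_{L^2(B_{\rho/2})}$ by comparing the weight to its value at scale $\rho/2$. The splitting of the leading term at $|x|=1/2$ rather than at $|x|=\rho$ is a cosmetic difference.

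There is, however, one genuine gap: your treatment of the potential term. You absorb $\||x|^{-\lambda}q\,\chi_\eta\phi u\|_{L^p}\le\vartheta\||x|^{-\lambda}v\|_{L^{p'}}$ into the left-hand side of the Carleman estimate and assert that the hypothesis $2\vartheta\sigma^2<1$ makes the absorbed factor less than $1$ ``through the Sobolev constant $\sigma$.'' But the constant that must beat $\vartheta$ in this absorption is the Carleman constant $\kappa=\kappa(n)$ of \eqref{1}, not the Sobolev constant $\sigma$ of the embedding $H_0^1(B)\hookrightarrow L^{p'}(B)$; these are unrelated, and nothing in the hypotheses gives $\kappa\vartheta<1$. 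As written, your argument only covers $\vartheta<(2\kappa)^{-1}$. The paper handles this point with a separate lemma (Lemma \ref{lemma1}), which upgrades \eqref{1} to a Carleman inequality for $\Delta+q$ uniformly over $q$ in a ball of $L^{n/2}(B)$ of \emph{arbitrary} radius $N$, at the price of a constant $\kappa_N$ depending on $N$, by iterating the absorption on small fractions of $q$; you need that lemma or an equivalent device. A secondary correction: you claim the smallness $2\vartheta\sigma^2<1$ is used \emph{only} in the Carleman absorption, but in the paper it is used only in the Caccioppoli inequality (Lemma \ref{lemma0}), to absorb $\int_B q\chi^2u^2\,dx$ via $\|\chi u\|_{L^{p'}}^2\le\sigma^2\|\nabla(\chi u)\|_{L^2}^2$ --- an absorption your own Caccioppoli step also requires, since $q$ is merely in $L^{n/2}$. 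So the roles of the two constants are exactly inverted in your write-up.
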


Let's explain why Theorem \ref{mt1.0} quantifies the SUCP for the operator $\Delta +q$, $q\in Q$, in the class $W^{2,p}(B)$. Let $u\in W^{2,p}(B)$ such that $(\Delta+q)u=0$ in $B$ and $u$ vanishes of infinite order at $0$. For all $\lambda \in \Lambda$, $0<\eta <  \frac{\rho}{4}$ and $m=[\lambda]+4$, where $[\lambda]$ is the entire part of $\lambda$, \eqref{mat1.0} yields
\[
\mathbf{c}\|u\|_{L^2(B_{\frac{\rho}{2}})}\le 2^{-(\lambda+1)}\|(\Delta+q) (\phi u)\|_{L^{p}(B)}+3^{[\lambda]+4}\eta^{[\lambda]+1-\lambda}\varrho_{[\lambda]+4}(u).
\]
Taking the limit when $\eta$ tends to zero in this inequality, we find
\[
\mathbf{c}\|u\|_{L^2\left(B_{\rho/2}\right)}\le 2^{-(\lambda+1)}\|(\Delta+q) (\phi u)\|_{L^{p}(B)}.
\]
Let $(\lambda_j)_{j\in \mathbb{N}}$ be a sequence of $\Lambda$ tending to $\infty$ when $j$ goes to $\infty$. From the preceding inequality, we have
\[
\mathbf{c}\|u\|_{L^2\left(B_{\rho/2}\right)}\le 2^{-(\lambda_j+1)}\|(\Delta+q) (\phi u)\|_{L^{p}(B)},\quad j\in \mathbb{N},
\]
from which we obtain that $u=0$ in $B_{\frac{\rho}{2}}$. Hence, $u=0$ in $\bigcup_{0<\rho <\frac{1}{2}}B_{\frac{\rho}{2}}=B_{\frac{1}{4}}$. In other words, we proved the following result.

\begin{corollary}\label{corollary1}
For $q\in Q$, let $u\in W^{2,p}(B)$  such that  $(\Delta +q)u=0$ in $B$ and $u$ vanishes of infinite order at $0$. Then $u=0$ in $B_{\frac{1}{4}}$.
\end{corollary}

The key ingredient in the proof of Theorem \ref{mt1.0} is the Carleman inequality \eqref{1} due to Jerison and Kenig \cite{JK}. For elliptic operators with variable leading coefficients, a Carleman inequality is indeed available \cite{KT}. However, this Carleman estimate is less flexible to use than \eqref{1}, and we do not believe it is the appropriate tool for establishing a SUCP for operators with variable leading coefficients. We note the paper by Malinnikova and Vessella \cite{MV}, in which the authors-relying on the Carleman inequality of Koch and Tataru \cite{KT}-proved a quantitative strong unique continuation result from a set of positive measure.

Under the additional assumption $q\in L^n(B)$, we combine \eqref{mat1.0} with \cite[Theorem 1.2 and comments in Section 1.3]{Ch} to obtain a global quantitative SUCP. Precisely, we have the following result, where
\[
\mathbf{Q}:=\left\{q\in L^{n}(B);\; \|q\|_{L^n(B)}\le |B|^{-\frac{1}{n}}\varsigma \right\}\; (\subset Q).
\]

\begin{theorem}\label{mt2}
Let $0<s,t<\frac{1}{2}$. For all $0<r <1$, $0<\eta<\frac{\rho}{4}$, $m\in \mathbb{N}$, $\lambda \in \Lambda$, $q\in \mathbf{Q}$ and  $u\in H^2(B)\cap O_m$ we have
\begin{align*}
&\mathbf{c}\|u\|_{H^{\frac{3}{2}+t}(B)}\le r^{\zeta}\|u\|_{H^2(B)} 
\\
&\hskip 1.5cm +e^{\mathfrak{b}r^{-\nu}}\Big(\eta^{-\lambda}\|(\Delta+q) u\|_{L^2(B)} 
\\
&\hskip 4cm + 2^{-(\lambda+1)}\|(\Delta+q) (\phi u)\|_{L^{p}(B)}+3^m\eta^{m-(\lambda+3)}\varrho_m(u)\Big),
\end{align*}
where $\mathbf{c}=\mathbf{c}(n,s, t, r_0,\sigma,\vartheta)>0$, $\mathfrak{b}=\mathfrak{b}(n, s, t, r_0,\sigma,\vartheta)>0$, $\nu=\nu(n)>0$ are constants and $\zeta =\min \left(\frac{\frac{1}{2}-t}{1+t},\frac{s}{4}\right)$.
\end{theorem}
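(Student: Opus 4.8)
The plan is to obtain Theorem \ref{mt2} by concatenating two inequalities: the local quantitative estimate \eqref{mat1.0} of Theorem \ref{mt1.0}, which controls the interior quantity $\|u\|_{L^2(B_{\rho/2})}$, and the global quantitative unique continuation from interior data of \cite[Theorem 1.2 and Section 1.3]{Ch}, the ``interior data'' being precisely $\|u\|_{L^2(B_{\rho/2})}$. The hypothesis $q\in\mathbf{Q}$ (i.e.\ $q\in L^n(B)$) is exactly what the result from \cite{Ch} requires, while it is more than enough for Theorem \ref{mt1.0}.

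I would first record the two elementary reductions that make the hypotheses compatible. Since $p=2n/(n+2)<2$ for $n\ge 3$ and $B$ is bounded, H\"older's inequality yields $L^2(B)\hookrightarrow L^p(B)$ with $\|(\Delta+q)u\|_{L^p(B)}\le |B|^{1/n}\|(\Delta+q)u\|_{L^2(B)}$; this is what allows the $L^p$ source term in \eqref{mat1.0} to be controlled by the $L^2$ source term appearing in Theorem \ref{mt2}. Again by H\"older, $q\in\mathbf{Q}$ gives $\|q\|_{L^{n/2}(B)}\le |B|^{1/n}\|q\|_{L^n(B)}\le\vartheta$, so $\mathbf{Q}\subset Q$ and Theorem \ref{mt1.0} applies; and since $H^2(B)\hookrightarrow W^{2,p}(B)$, every $u\in H^2(B)\cap O_m$ is admissible there.

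Next I would invoke \cite[Theorem 1.2]{Ch} with source $f:=(\Delta+q)u\in L^2(B)$ and interior region $B_{\rho/2}$ to obtain a conditional stability inequality of the form
\[
C\|u\|_{H^{3/2+t}(B)}\le r^{\zeta}\|u\|_{H^2(B)}+e^{\mathfrak{b}r^{-\nu}}\left(\|u\|_{L^2(B_{\rho/2})}+\|(\Delta+q)u\|_{L^2(B)}\right),
\]
valid for all $0<r<1$, with $\zeta=\min((1/2-t)/(1+t),s/4)$ and $\mathfrak{b},\nu$ of the stated form. Here $r$ is kept free: $r^{\zeta}\|u\|_{H^2(B)}$ is the a priori part vanishing as $r\to 0$, and $e^{\mathfrak{b}r^{-\nu}}$ is the amplification factor of the interior and source data. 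Finally I would substitute into $\|u\|_{L^2(B_{\rho/2})}$ the bound furnished by \eqref{mat1.0} (after replacing $\|(\Delta+q)u\|_{L^p}$ by $\|(\Delta+q)u\|_{L^2}$ via the embedding above), which produces exactly the three terms $\eta^{-\lambda}\|(\Delta+q)u\|_{L^2(B)}$, $2^{-(\lambda+1)}\|(\Delta+q)(\phi u)\|_{L^p(B)}$ and $3^m\eta^{m-(\lambda+3)}\varrho_m(u)$ inside the exponential. The preexisting source term $\|(\Delta+q)u\|_{L^2(B)}$ is absorbed into the first of these, since $0<\eta<\rho/4<1$ and $\lambda>0$ give $\eta^{-\lambda}\ge 1$. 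Renaming constants, all of the form $C(n,s,t,\vartheta)$, yields the claimed inequality.

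The main obstacle I anticipate is the faithful transcription of \cite[Theorem 1.2]{Ch}: one must verify that its statement indeed produces the dependence $\zeta=\min((1/2-t)/(1+t),s/4)$ together with $\mathfrak{b}=\mathfrak{b}(n,s,t,\vartheta)$ and $\nu=\nu(n)$, that the interior data set can be taken to be exactly $B_{\rho/2}$ so that the output of Theorem \ref{mt1.0} plugs in verbatim, and that the interpolation yielding the left-hand $H^{3/2+t}$ norm between the a priori $H^2$ bound and the interior $L^2$ datum remains valid under the $L^n$-potential perturbation $q$. Once the reference is matched, the remaining work is the purely mechanical combination of two displayed inequalities and the absorption of constants.
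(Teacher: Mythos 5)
Your proposal follows exactly the route the paper intends: the paper gives no detailed proof of Theorem \ref{mt2}, stating only that it is derived by combining \eqref{mat1.0} with \cite[Theorem 1.2 and Section 1.3]{Ch}, and your concatenation of the two inequalities, together with the reductions $\mathbf{Q}\subset Q$, $L^2(B)\hookrightarrow L^p(B)$, $H^2(B)\hookrightarrow W^{2,p}(B)$ and the absorption of the source term via $\eta^{-\lambda}\ge 1$, is precisely the omitted mechanical work. The only caveat, which you correctly flag yourself, is that the argument rests on the exact form of the external statement in \cite{Ch} (interior set $B_{\rho/2}$, the exponents $\zeta$, $\mathfrak{b}$, $\nu$), which cannot be checked from the present paper alone.
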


Proceeding as for Corollary \ref{corollary1}, we deduce  the following global SUCP from Theorem \ref{mt2}.

\begin{corollary}\label{corollary2}
Let $q\in \mathbf{Q}$. If  $u\in H^2(B)$ is such that $(\Delta +q) u=0$ in $B$ and $u$ vanishes of infinite order at $0$, then $u=0$ in $B$.
\end{corollary}

Let $E\subset B$ be a set of positive measure. Proceeding as in \cite[Section 3]{Ch}, we obtain the following consequence of Corollary \ref{corollary2}.

\begin{corollary}\label{corollary2.0}
Let $q\in \mathbf{Q}$ and $u\in H^2(B)$ such that $(\Delta +q) u=0$ in $B$ and $u=0$ in $E$. Then, $u=0$ in $B$.
\end{corollary}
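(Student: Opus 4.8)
The plan is to reduce the statement to Corollary \ref{corollary2}: I will show that vanishing on a set $E$ of positive measure forces $u$ to vanish of infinite order at a suitable point, and then invoke the strong unique continuation property together with a propagation argument. First I would apply the Lebesgue density theorem to choose a point $x_0\in E$ of density one, so that $\delta(r):=|B_r(x_0)\setminus E|/|B_r(x_0)|\to 0$ as $r\to 0$. Since $u=0$ a.e.\ on $E$, the whole $L^2$ mass of $u$ over $B_r(x_0)$ is carried by the thin set $B_r(x_0)\setminus E$, and Hölder's inequality with exponents $n/2$ and $n/(n-2)$ combined with the scale-invariant Sobolev inequality $\|u\|_{L^{p'}(B_r)}\le C(\|\nabla u\|_{L^2(B_r)}+r^{-1}\|u\|_{L^2(B_r)})$ yields, for $r$ small,
\[
\|u\|_{L^2(B_r(x_0))}^2\le C\,\delta(r)^{2/n}r^2\,\|\nabla u\|_{L^2(B_r(x_0))}^2,
\]
the lower order term having been absorbed because $\delta(r)^{2/n}\to 0$.

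Next I would exploit the equation $\Delta u=-qu$ through a Caccioppoli inequality. Testing against $\chi^2 u$ with a cutoff $\chi=1$ on $B_r(x_0)$ supported in $B_{2r}(x_0)$ and absorbing the potential term by means of $\int \chi^2|q|u^2\le \vartheta\sigma^2\|\nabla(\chi u)\|_{L^2(B)}^2$ and the hypothesis $2\vartheta\sigma^2<1$, one obtains $\|\nabla u\|_{L^2(B_r(x_0))}^2\le C r^{-2}\|u\|_{L^2(B_{2r}(x_0))}^2$ with $C=C(n,\vartheta)$. Substituting this into the previous bound gives, for $g(r):=\|u\|_{L^2(B_r(x_0))}^2$, the recursion $g(r)\le C\,\delta(r)^{2/n}\,g(2r)$. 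Iterating along $r_k=r_0 2^{-k}$ and using $\delta(r_k)\to 0$, for each $m\in\mathbb{N}$ one can force the accumulated gain factors to dominate $2^{-mk}$, which produces constants $c_m$ with $g(r)\le c_m r^m$ for all small $r$; that is, $u$ vanishes of infinite order at $x_0$.

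The strong unique continuation property behind Corollary \ref{corollary2} is local and, the Carleman weight of \cite{JK} being centered at an arbitrary point, applies at $x_0$: infinite-order vanishing at the interior point $x_0$ forces $u=0$ in a neighborhood of $x_0$. Because $B$ is connected, the weak unique continuation property for $(\Delta+q)u=0$ with $q\in L^{n/2}(B)$ (itself a consequence of the estimate of \cite{JK}) then propagates this local vanishing to $u=0$ throughout $B$, as claimed.

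The step I expect to be the main obstacle is turning the infinitesimal density information into genuine infinite-order vanishing: one must arrange the Sobolev and Caccioppoli estimates so that the small factor $\delta(r)^{2/n}$ survives the constants, and check that both absorptions---the lower order term in the Sobolev step and the potential term in the Caccioppoli step---are uniform over $q\in Q$, which is precisely where $2\vartheta\sigma^2<1$ enters. By comparison, relocating the singularity of the Carleman weight to $x_0$ and the passage from local to global vanishing are routine.
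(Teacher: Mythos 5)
Your argument is correct and is essentially the route the paper takes: the paper's ``proof'' of this corollary is a citation of \cite[Section 3]{Ch}, which is precisely the Lebesgue density-point argument you describe (Hölder with exponents $n/2$, $n/(n-2)$ plus scaled Sobolev and Caccioppoli to get $g(r)\le C\delta(r)^{2/n}g(2r)$, hence infinite-order vanishing at a density point), followed by the SUCP recentered at that point and propagation by connectedness. The points you flag as delicate --- uniformity of the absorptions over $q$ via $2\vartheta\sigma^2<1$ and the recentering of the Carleman weight --- are handled exactly as you anticipate.
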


\section{Proof of Theorem \ref{mt1.0}}

\subsection{Caccioppoli's  inequality}

\begin{lemma}\label{lemma0}
Let $0<r,h <1$ such that $0<r+h <1$. There exists a constant $\mathbf{c}=\mathbf{c}(n, \sigma, \vartheta)>0$ with the property that for all $u\in W^{2,p}(B)$ and $q\in Q$ we have
\begin{equation}\label{ca1}
\mathbf{c}\|\nabla u\|_{L^2(B_r)}\le \|(\Delta+q) u\|_{L^p(B)}+h^{-1}\|u\|_{L^2(B_{r+h})}.
\end{equation}
\end{lemma}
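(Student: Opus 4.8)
The plan is to run a standard energy (Caccioppoli) argument, using the critical Sobolev embedding together with the smallness of $\vartheta$ to absorb the potential term. Fix a cutoff $\chi\in C_0^\infty(B)$ with $\chi=1$ on $B_r$, $\operatorname{supp}\chi\subset B_{r+h}$, $0\le\chi\le1$ and $|\nabla\chi|\le c/h$ for a universal constant $c$; this is possible because $r+h<1$ forces $\overline{B_{r+h}}\subset B$. Writing $f:=(\Delta+q)u$, so that $\Delta u=f-qu$, I would test this identity against $\chi^2\bar u\in H_0^1(B)$ and integrate by parts. Since $u\in W^{2,p}(B)\hookrightarrow H^1(B)\hookrightarrow L^{p'}(B)$ and $f\in L^p(B)$ (note $qu\in L^p$ by Hölder, as $q\in L^{n/2}$ and $u\in L^{p'}$), all pairings are well defined by $L^p$--$L^{p'}$ and $L^2$--$L^2$ duality, and taking real parts yields the energy identity
\[
\int_B\chi^2|\nabla u|^2=-2\,\mathrm{Re}\!\int_B\chi\bar u\,\nabla\chi\cdot\nabla u-\mathrm{Re}\!\int_B\chi^2\bar u\,f+\mathrm{Re}\!\int_B\chi^2 q|u|^2.
\]
Set $I:=\int_B\chi^2|\nabla u|^2=\|\chi\nabla u\|_{L^2(B)}^2$, which dominates $\|\nabla u\|_{L^2(B_r)}^2$ because $\chi\equiv1$ on $B_r$.

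Next I would estimate the three terms on the right. The cross term is handled by Young's inequality: $2\int\chi|\nabla\chi||u||\nabla u|\le\varepsilon_1 I+\varepsilon_1^{-1}\int|\nabla\chi|^2|u|^2\le\varepsilon_1 I+ c^2\varepsilon_1^{-1}h^{-2}\|u\|_{L^2(B_{r+h})}^2$, using the support and gradient bound on $\chi$. For the potential term, Hölder with exponents $n/2$ and $n/(n-2)$ gives $\int|q||\chi u|^2\le\|q\|_{L^{n/2}(B)}\|\chi u\|_{L^{p'}(B)}^2\le\vartheta\sigma^2\|\nabla(\chi u)\|_{L^2(B)}^2$, since $\chi u\in H_0^1(B)$ and $p'=2^*$; expanding $\|\nabla(\chi u)\|_{L^2}^2\le 2I+2\int|\nabla\chi|^2|u|^2$ produces the crucial term $2\vartheta\sigma^2 I$ plus a remainder controlled by $h^{-2}\|u\|_{L^2(B_{r+h})}^2$. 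For the source term, $|\int\chi^2\bar u f|\le\|\chi u\|_{L^{p'}}\|f\|_{L^p}\le\sigma\|\nabla(\chi u)\|_{L^2}\|f\|_{L^p}\le\sigma(I^{1/2}+\|u\nabla\chi\|_{L^2})\|f\|_{L^p}$, and a further application of Young's inequality turns this into $\varepsilon_2 I+\tfrac12\int|\nabla\chi|^2|u|^2+C(\varepsilon_2)\|f\|_{L^p}^2$.

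Collecting everything, I would obtain $I\le(\varepsilon_1+2\vartheta\sigma^2+\varepsilon_2)I+C\big(h^{-2}\|u\|_{L^2(B_{r+h})}^2+\|f\|_{L^p(B)}^2\big)$. The hypothesis $2\vartheta\sigma^2<1$ is exactly what lets me choose $\varepsilon_1,\varepsilon_2$ small enough that $\delta:=1-\varepsilon_1-2\vartheta\sigma^2-\varepsilon_2>0$, so the $I$-terms on the right can be absorbed into the left, leaving $I\le\delta^{-1}C\big(h^{-2}\|u\|_{L^2(B_{r+h})}^2+\|f\|_{L^p}^2\big)$. Taking square roots and using $\sqrt{a^2+b^2}\le a+b$ gives $\|\nabla u\|_{L^2(B_r)}\le I^{1/2}\le C'\big(h^{-1}\|u\|_{L^2(B_{r+h})}+\|(\Delta+q)u\|_{L^p(B)}\big)$, which is \eqref{ca1} after renaming the constant; every constant depends only on $n$ (through the fixed Sobolev constant $\sigma$ and the universal cutoff constant $c$) and on $\vartheta$.

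The main obstacle is the potential term: because $q$ only lies in the borderline space $L^{n/2}(B)$, controlling $\int q|u|^2$ forces the critical (and non-compact) Sobolev embedding $H_0^1(B)\hookrightarrow L^{2^*}(B)=L^{p'}(B)$, which produces a full factor $2\vartheta\sigma^2$ in front of $I$ rather than an arbitrarily small one. Everything hinges on the smallness condition $2\vartheta\sigma^2<1$, without which the absorption step would fail; the remaining care is merely bookkeeping to ensure the source term enters linearly (via the final square root) and that the gradient-of-cutoff remainders are all absorbed into $h^{-1}\|u\|_{L^2(B_{r+h})}$.
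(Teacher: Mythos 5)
Your proposal is correct and follows essentially the same route as the paper: the same cutoff with $|\nabla\chi|\lesssim h^{-1}$, the same integration by parts against $\chi^2\bar u$, the same double Hölder plus critical Sobolev embedding to bound $\int q|\chi u|^2$ by $2\vartheta\sigma^2\|\nabla(\chi u)\|_{L^2}^2$, and the same absorption made possible by $2\vartheta\sigma^2<1$. The only differences are cosmetic (you split $(\Delta+q)u$ into $f-qu$ before testing, while the paper keeps $\int\chi^2u(\Delta+q)u$ together and adds back the $q$-term afterwards).
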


\begin{proof}
Let $\chi \in C_0^\infty (B_{r+h})$ such that  $0\le \chi \le 1$, $\chi =1$ in a neighborhood of $B_r$ and $|\partial^\alpha\chi |\le c_0h^{-1}$ for all $|\alpha|=1$, where $c_0>0$ is a universal constant.

Let $u\in W^{2,p}(B)$ and $\epsilon>0$. Applying H\"older's inequality, we obtain
\[
\left| \int_B\chi^2 u(\Delta+q) udx\right| \le \|(\Delta+q) u\|_{L^p(B)}\|\chi u\|_{L^{p'}(B)}
\]
and hence
\begin{align*}
\left| \int_B\chi^2 u(\Delta+q) udx\right| &\le (2\epsilon)^{-1}\|(\Delta+q) u\|_{L^p(B)}^2+2^{-1}\epsilon\|\chi u\|_{L^{p'}(B)}^2
\\
&\le (2\epsilon)^{-1}\|(\Delta+q) u\|_{L^p(B)}^2+2^{-1}\epsilon\sigma^2\|\nabla (\chi u)\|_{L^2(B)}^2.
\end{align*}
That is we have
\begin{align}
\left| \int_B\chi^2 u(\Delta+q) udx\right| \le (2\epsilon)^{-1}&\|(\Delta+q) u\|_{L^p(B)}^2\label{ca2}
\\
&+\epsilon\sigma^2\left(\|u\nabla \chi\|_{L^2(B)}^2+\|\chi \nabla u\|_{L^2(B)}^2\right).\nonumber
\end{align}

On the other hand, performing an  integration by parts, we get
\[
\int_B\chi^2 u\Delta udx=-\int_B\chi^2|\nabla u|^2dx-\int_B u\nabla \chi^2 \cdot \nabla udx.
\]
Thus, 
\[
\|\chi \nabla u\|_{L^2(B)}^2\le \left|\int_B\chi^2 u\Delta udx\right|+2\left|\int_B u\chi \nabla \chi \cdot \nabla udx\right|.
\]
By applying Cauchy-Schwarz's inequality and then a convexity inequality to the second term of the right hand side of inequality above, we get
\[
\|\chi \nabla u\|_{L^2(B)}^2\le \left|\int_B\chi^2 u\Delta udx\right|+\epsilon^{-1}\|u\nabla \chi\|_{L^2(B)}^2+\epsilon\|\chi \nabla u\|_{L^2(B)}^2.
\]
Hence,
\[
(1-\epsilon)\|\chi \nabla u\|_{L^2(B)}^2\le \left|\int_B\chi^2 u\Delta udx\right|+\epsilon^{-1}\|u\nabla \chi\|_{L^2(B)}^2,
\]
from which we obtain
\begin{equation}\label{ca3}
(1-\epsilon)\|\chi \nabla u\|_{L^2(B)}^2\le \left|\int_B\chi^2 u(\Delta+q) udx\right|+ \left|\int_Bq\chi^2 u^2dx\right|+\epsilon^{-1}\|u\nabla \chi\|_{L^2(B)}^2.
\end{equation}
By applying twice H\"older's inequality, we obtain
\[
\left|\int_Bq\chi^2 u^2dx\right|\le \|q\chi u\|_{L^p(B)}\|\chi u\|_{L^{p'}(B)}\le \|q\|_{L^{\frac{n}{2}}(B)}\|\chi u\|_{L^{p'}(B)}^2
\]
Hence, we have
\begin{align*}
\left|\int_Bq\chi^2 u^2dx\right|&\le  \|q\|_{L^{\frac{n}{2}}(B)}\sigma^2\|\nabla (\chi u)\|_{L^2(B)}^2
\\
& \le  2\|q\|_{L^{n/2}(B)}\sigma^2\left(\|u\nabla \chi \|_{L^2(B)}^2+\|\chi \nabla u\|_{L^2(B)}^2\right)
\\
& \le  2\vartheta\sigma^2\left(\|u\nabla \chi \|_{L^2(B)}^2+\|\chi \nabla u\|_{L^2(B)}^2\right).
\end{align*}
This inequality in \eqref{ca3} gives
\begin{equation}\label{ca4}
(1-\epsilon-2\vartheta\sigma^2)\|\chi \nabla u\|_{L^2(B)}^2\le \left|\int_B\chi^2 u(\Delta+q) udx\right|+(\epsilon^{-1}+2\vartheta\sigma^2)\|u\nabla \chi\|_{L^2(B)}^2.
\end{equation}
Putting \eqref{ca2} in \eqref{ca4}, we end up getting
\begin{align*}
(1-2\vartheta\sigma^2-\epsilon(1+\sigma^2))\|\chi \nabla u\|_{L^2(B)}^2 &\le (2\epsilon)^{-1}\|(\Delta+q) u\|_{L^p(B)}^2
\\
&\qquad +(\epsilon^{-1}+\epsilon\sigma^2+2\vartheta\sigma^2)\|u\nabla \chi\|_{L^2(B)}^2.
\end{align*}
The expected inequality follows by taking $\epsilon=\frac{1-2\vartheta\sigma^2}{2+2\sigma^2}$.
\end{proof}

\subsection{Proof of Theorem \ref{mt1.0}}

Let  $(\psi_\eta)_{0<\eta < \frac{\rho}{4}}$ be a family of $C^\infty(\mathbb{R}^n)$ such that $0\le \psi_\eta\le 1$, $\psi_\eta(x)=0$ if $|x|\le \eta$, $\psi_\eta(x)=1$ if $|x|\ge 2\eta$ and
\[
|\partial^\alpha \psi_\eta|\le c\eta^{-|\alpha|},\quad |\alpha|\le 2,\; 0<\eta <\frac{\rho}{4},
\]
where $c>0$ is a universal constant. 

Let $m\in \mathbb{N}$, $q\in Q$, $u\in W^{2,p}(B)\cap O_m$ and $v:=\phi u$. Let $0<\eta< \frac{\rho}{4}$ be arbitrarily fixed. By density, we get from \eqref{4} the following inequality
\begin{equation}\label{5}
\||x|^{-\lambda}\psi_\eta v\|_{L^{p'}(B)}\le \kappa_0 \||x|^{-\lambda}(\Delta+q) (\psi_\eta v)\|_{L^{p}(B)},
\end{equation}
where $\kappa_0$ is the constant  in \eqref{4}. Therefore, we have
\begin{align*}
\||x|^{-\lambda}\psi_\eta u\|_{L^{p'}(B_{\frac{\rho}{2}})}\le \kappa_0 \||x|^{-\lambda}&(\Delta+q) (\psi_\eta v)\|_{L^{p}(B_\rho)}
\\
&+\kappa_0\||x|^{-\lambda}(\Delta+q) (\psi_\eta v)\|_{L^{p}(\{\rho<|x|<1\})},
\end{align*}
and hence
\begin{align}
\||x|^{-\lambda}\psi_\eta u\|_{L^{p'}(B_{\frac{\rho}{2}})}\le \kappa_0 \||x|^{-\lambda}(\Delta+q) &(\psi_\eta u)\|_{L^{p}(B_\rho)}\label{5.0}
\\
&+\kappa_0 \rho^{-\lambda}\|(\Delta+q) v\|_{L^{p}(B)}.\nonumber
\end{align}
In consequence, we have
\begin{align}
\||x|^{-\lambda} u\|_{L^{p'}(\{2\eta <|x]<\frac{\rho}{2}\})}\le \kappa_0 \||x|^{-\lambda}(\Delta+q) &(\psi_\eta u)\|_{L^{p}(B_{\rho})}\label{5.1}
\\
&+\kappa_0 \rho^{-\lambda}\|(\Delta+q) v\|_{L^{p}(B)}.\nonumber
\end{align}
On the other hand,  applying H\"older's inequality, we obtain
\[
\|u\|_{L^2(\{2\eta <|x]<\frac{\rho}{2}\})}\le \||x|^\lambda\|_{L^n(\{2\eta <|x|<\frac{\rho}{2}\})}\||x|^{-\lambda} u\|_{L^{p'}(\{2\eta <|x]<\frac{\rho}{2}\})}.
\]
Since
\[
\||x|^\lambda\|_{L^n(\{2\eta <|x]<\frac{\rho}{2}\})}\le \||x|^\lambda\|_{L^n(B_{\frac{\rho}{2}})}=\left[\frac{\omega_n}{n(\lambda+1)}\right]^{1/n}\left(\frac{\rho}{2}\right)^{\lambda+1},
\]
where $\omega_n=|\mathbb{S}^{n-1}|$, we get
\begin{equation}\label{5.2}
\|u\|_{L^2(\{2\eta <|x]<\frac{\rho}{2}\})}\le\left[\frac{\omega_n}{n(\lambda+1)}\right]^{\frac{1}{n}}\left(\frac{\rho}{2}\right)^{\lambda+1}\||x|^{-\lambda} u\|_{L^{p'}(\{2\eta <|x]<\frac{\rho}{2}\})}.
\end{equation}
By putting together \eqref{5.1} and \eqref{5.2}, we obtain
\begin{align*}
&\|u\|_{L^2(B_{\frac{\rho}{2}})}\le \kappa_1 (1+\lambda)^{-\frac{1}{n}}\left(\frac{\rho}{2}\right)^{\lambda+1} \||x|^{-\lambda}(\Delta+q)(\psi_\eta u)\|_{L^{p}(B_\rho)}
\\
&\hskip 5cm +\kappa_1 \rho2^{-(\lambda+1)}\|(\Delta+q) v\|_{L^{p}(B)}+\|u\|_{L^2(B_{2\eta})},
\end{align*}
where $\kappa_1=\kappa_0\left(\frac{\omega_n}{n}\right)^{\frac{1}{n}}$. Hence, the following inequality holds
\begin{align}
&\|u\|_{L^2(B_{\frac{\rho}{2}})}\le \kappa_1  \||x|^{-\lambda}(\Delta+q)(\psi_\eta u)\|_{L^{p}(B_\rho)}\label{5.3}
\\
&\hskip 3cm +\kappa_1 2^{-(\lambda+1)}\|(\Delta+q) v\|_{L^{p}(B)}+\|u\|_{L^2(B_{2\eta})}.\nonumber
\end{align}

Next, as 
\[
\Delta (\psi_\eta u)=\psi_\eta \Delta u+ 2\nabla \psi_\eta \cdot \nabla u+\Delta \psi_\eta u
\]
and  $\mbox{supp}(2\nabla \psi_\eta\cdot \nabla u+\Delta \psi_\eta u)\subset \{\eta<|x|<2\eta\}$, we get
\begin{align}
\||x|^{-\lambda}(\Delta+q) (\psi_\eta u)\|_{L^{p}(B_\rho)}\le  &\eta^{-\lambda}\|(\Delta+q) u\|_{L^p(B)} \label{7}
\\
&+ c_0\eta ^{-(\lambda+2)}\left(\|\nabla u\|_{L^2(B_{2\eta})}+\|u\|_{L^2(B_{2\eta})}\right),\nonumber
\end{align}
for some constant $c_0=c_0(n)>0$.

In the remaining part of this proof, $\mathbf{c}=\mathbf{c}(n,r_0,\vartheta)>0$ will denote a generic constant.

From inequality \eqref{ca1}, we obtain
\[
\mathbf{c}\|\nabla u\|_{L^2(B_{2\eta})}\le \|(\Delta+q) u\|_{L^p(B)}+ \eta^{-1}\|u\|_{L^2(B_{3\eta})}.
\]

This inequality in \eqref{7} yields
\begin{align}
\mathbf{c}\||x|^{-\lambda}(\Delta+q) (\psi_\eta u)&\|_{L^{p}(B_\rho)} \label{8}
\\
&\le \eta^{-\lambda}\|(\Delta+q) u\|_{L^p(B)}+\eta^{-(\lambda+3)}\|u\|_{L^2(B_{3\eta})}.\nonumber
\end{align}
Therefore, it follows from \eqref{5.3} and \eqref{8}
\begin{align*}
&\mathbf{c}\|u\|_{L^2(B_{\frac{\rho}{2}})}\le   \eta^{-\lambda}\|(\Delta+q) u\|_{L^p(B)}+ 2^{-(\lambda+1)}\|(\Delta+q) v\|_{L^{p}(B)}
\\
&\hskip 8cm +\eta^{-(\lambda+3)}\|u\|_{L^2(B_{3\eta})}.
\end{align*}
We end up getting
\begin{align*}
&\mathbf{c}\|u\|_{L^2(B_{\frac{\rho}{2}})}\le   \eta^{-\lambda}\|(\Delta+q) u\|_{L^p(B)}+ 2^{-(\lambda+1)}\|(\Delta+q) (\phi u)\|_{L^{p}(B)}
\\
&\hskip 8.5cm +3^m\eta^{m-(\lambda+3)}\varrho_m(u).
\end{align*}
This is the expected inequality.

\section{Other results}

\subsection{Quantitative SUCP around a fixed potential}

Let $q_0\in L^{\frac{n}{2}}(B)$ and $0<\underline{\rho}<\frac{1}{2}$ such that $\sigma^2\|q_0\|_{L^{\frac{n}{2}}(B_{\underline{\rho}})}<\frac{1}{4}$. Let 
\[
\underline{Q}:=\left\{q\in L^{\frac{n}{2}}(B);\; \sigma^2\|q-q_0\|_{L^{\frac{n}{2}}(B_{\underline{\rho}})}<\frac{1}{4}\right\}.
\]
The following inequality will be useful later on.
\begin{equation}\label{c1}
\sigma^2\|q\|_{L^{\frac{n}{2}}(B_{\underline{\rho}})}<\frac{1}{2},\quad q\in \underline{Q}.
\end{equation}

Under the definitions and the notations of the proof of Theorem \ref{mt1.0} with $\rho=\underline{\rho}$, instead of \eqref{5.0} we have
\[
\||x|^{-\lambda}\psi_\eta u\|_{L^{p'}(B_{\underline{\rho}})}\le \kappa_0 \||x|^{-\lambda}\Delta (\psi_\eta u)\|_{L^{p}(B_{\underline{\rho}})}
+\kappa_0 \rho^{-\lambda}\|\Delta v\|_{L^{p}(B)}.
\]
In light of \eqref{c1}, this inequality implies for all $q\in \underline{Q}$
\[
\||x|^{-\lambda}\psi_\eta u\|_{L^{p'}(B_{\underline{\rho}})}\le 2\kappa_0 \||x|^{-\lambda}(\Delta+q) (\psi_\eta u)\|_{L^{p}(B_{\underline{\rho}})}
+2\kappa_0 \rho^{-\lambda}\|\Delta v\|_{L^{p}(B)},
\]
In particular, we have for all $q\in \underline{Q}$
\[
\||x|^{-\lambda}\psi_\eta u\|_{L^{p'}(B_{\frac{\underline{\rho}}{2}})}\le 2\kappa_0 \||x|^{-\lambda}(\Delta+q) (\psi_\eta u)\|_{L^{p}(B_{\underline{\rho}})}
+2\kappa_0 \rho^{-\lambda}\|\Delta v\|_{L^{p}(B)},
\]
As the Caccioppoli's inequality \eqref{ca1} remains valid when $q\in Q$ is replaced by $q\in \underline{Q}$ and $0<r+h<1$ is replaced by $0<r+h<\underline{\rho}$, by modifying slightly the proof of Theorem \ref{mt1.0}, we get the following result.

\begin{theorem}\label{mt1.1}
There exists a constant $\mathfrak{c}=\mathfrak{c}(n)>0$ such that for all $m\in \mathbb{N}$, $\lambda \in \Lambda$, $q\in \underline{Q}$, $u\in W^{2,p}(B)\cap O_m$ and $0<\eta <  \frac{\underline{\rho}}{4}$ we have 
\begin{align*}
&\mathfrak{c}\|u\|_{L^2(B_{\frac{\underline{\rho}}{2}})}\le   \eta^{-\lambda}\|(\Delta+q) u\|_{L^p(B)}
\\
&\hskip 3cm+ 2^{-(\lambda+1)}\|\Delta (\phi u)\|_{L^{p}(B)}
 +3^m\eta^{m-(\lambda+3)}\varrho_m(u).
\end{align*}
\end{theorem}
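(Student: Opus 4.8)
The plan is to mirror the proof of Theorem \ref{mt1.0} line by line, making the single structural change dictated by the fact that $q \in \underline{Q}$ only gives a smallness estimate on the ball $B_{\underline\rho}$ rather than on all of $B$. The excerpt has already done the essential algebra for us: it produces the inequality
\[
\||x|^{-\lambda}\psi_\eta u\|_{L^{p'}(B_{\underline{\rho}/2})}\le 2\kappa \||x|^{-\lambda}(\Delta+q) (\psi_\eta u)\|_{L^{p}(B_{\underline{\rho}})}
+2\kappa \rho^{-\lambda}\|\Delta v\|_{L^{p}(B)},
\]
which is the analogue of \eqref{5.0} valid for every $q\in\underline Q$. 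The key point here is that the exterior term carries $\|\Delta v\|_{L^p(B)}$ (with $v=\phi u$) in place of $\|(\Delta+q)v\|_{L^p(B)}$, because on the annulus $\{\underline\rho<|x|<1\}$ one has no control on $q$ and must absorb the whole potential contribution into the Carleman constant using \eqref{c1}. This explains why the final estimate in Theorem \ref{mt1.1} displays $\|\Delta(\phi u)\|_{L^p(B)}$ rather than $\|(\Delta+q)(\phi u)\|_{L^p(B)}$, and why the constant $C$ now depends only on $n$ (the $\vartheta$-dependence has been swallowed into the fixed smallness threshold $1/4$ defining $\underline Q$).

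From this starting inequality I would run exactly the steps \eqref{5.1}--\eqref{5.3} of the original proof, with $\rho$ replaced by $\underline\rho$ throughout. First I would restrict the left-hand side to the annulus $\{2\eta<|x|<\underline\rho/2\}$, on which $\psi_\eta u = u$, giving the analogue of \eqref{5.1}. Then I would apply H\"older's inequality with the weight $|x|^\lambda \in L^n(B_{\underline\rho/2})$, using the explicit constant $[\omega_n/(n(\lambda+1))]^{1/n}(\underline\rho/2)^{\lambda+1}$, to pass from the weighted $L^{p'}$ norm back to an unweighted $L^2$ norm on the annulus, as in \eqref{5.2}. Adding back the inner piece $\|u\|_{L^2(B_{2\eta})}$ recovers $\|u\|_{L^2(B_{\underline\rho/2})}$ on the left and produces the analogue of \eqref{5.3}, in which the factor $(\underline\rho/2)^{\lambda+1}$ combines with $\rho^{-\lambda}=\underline\rho^{-\lambda}$ in the exterior term to leave the clean coefficient $2^{-(\lambda+1)}$ in front of $\|\Delta v\|_{L^p(B)}$.

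The remaining steps are identical to the original: I would expand $\Delta(\psi_\eta u)=\psi_\eta\Delta u+2\nabla\psi_\eta\cdot\nabla u+\Delta\psi_\eta\, u$, note that the last two terms are supported in $\{\eta<|x|<2\eta\}$, and use the bounds $|\partial^\alpha\psi_\eta|\le c\eta^{-|\alpha|}$ to obtain the analogue of \eqref{7}, namely a bound by $\eta^{-\lambda}\|(\Delta+q)u\|_{L^p(B)}$ plus $c_0\eta^{-(\lambda+2)}(\|\nabla u\|_{L^2(B_{2\eta})}+\|u\|_{L^2(B_{2\eta})})$. Here I would invoke Caccioppoli's inequality \eqref{ca1} — which, as the parenthetical remark in the excerpt notes, remains valid for $q\in\underline Q$ provided $0<r+h<\underline\rho$ — to replace $\|\nabla u\|_{L^2(B_{2\eta})}$ by $\|(\Delta+q)u\|_{L^p(B)}+\eta^{-1}\|u\|_{L^2(B_{3\eta})}$, yielding the analogue of \eqref{8}. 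Combining this with the analogue of \eqref{5.3} gives the $\|u\|_{L^2(B_{3\eta})}$ term, which is finally dominated by $3^m\eta^{m}\varrho_m(u)$ using the definition of $\varrho_m$ together with $u\in O_m$, exactly as in the last display of the proof of Theorem \ref{mt1.0}.

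I do not anticipate a genuine obstacle, since the argument is a careful transcription rather than a new idea; the only point demanding attention is bookkeeping of the radius of validity. The Carleman and Caccioppoli estimates must both be applied on balls of radius strictly below $\underline\rho$, so I would check that the cutoff supports $\{\eta<|x|<2\eta\}$ and the Caccioppoli ball $B_{3\eta}$ all sit inside $B_{\underline\rho}$ — this is guaranteed by the hypothesis $0<\eta<\underline\rho/4$, which makes $3\eta<3\underline\rho/4<\underline\rho$. The second mild subtlety is tracking how the smallness constant from \eqref{c1} enters: it appears only through the factor $2\kappa$ in the corrected Carleman estimate, and since $\underline\rho$ is fixed once and for all (depending on $q_0$ but not on the final running parameters), the resulting constant $C$ depends on $n$ alone, as claimed.
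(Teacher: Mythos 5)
Your proposal is correct and follows essentially the same route as the paper: the paper itself only sketches this proof by deriving the modified Carleman-type inequality with the $q$-term absorbed on $B_{\underline{\rho}}$ via \eqref{c1} (leaving $\|\Delta v\|_{L^p(B)}$ in the exterior term) and then saying to repeat the argument of Theorem \ref{mt1.0}, which is exactly the transcription you carry out, including the remark on the validity of Caccioppoli's inequality for $q\in\underline{Q}$ with $0<r+h<\underline{\rho}$. Your bookkeeping of the radii and of the origin of the $\|\Delta(\phi u)\|_{L^p(B)}$ term matches the paper's intent.
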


An immediate consequence of this theorem is

\begin{corollary}\label{corollaryc1}
Let $q\in L^{\frac{n}{2}}(B)$. If $u\in W^{2,p}(B)$ is such that $(\Delta +q)u=0$ in $B$ and $u$ vanishes of infinite order at $0$, then $u=0$ in a neighborhood $V$ of $0$.
\end{corollary}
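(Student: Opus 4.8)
The plan is to mimic exactly the limiting argument carried out right after Theorem~\ref{mt1.0} to produce Corollary~\ref{corollary1}, the only genuinely new point being a preliminary reduction that places an \emph{arbitrary} potential $q\in L^{n/2}(B)$ inside the class $\underline{Q}$ for a suitable choice of $q_0$ and $\underline{\rho}$. First I would take $q_0:=q$. Since $q\in L^{n/2}(B)$, the map $r\mapsto \|q\|_{L^{n/2}(B_r)}$ tends to $0$ as $r\to 0$ by absolute continuity of the integral, so I can fix $0<\underline{\rho}<1/2$ with $\sigma^2\|q\|_{L^{n/2}(B_{\underline{\rho}})}<1/4$. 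With this choice $\|q-q_0\|_{L^{n/2}(B_{\underline{\rho}})}=0$, hence $q\in \underline{Q}$ and Theorem~\ref{mt1.1} becomes applicable to $u$.

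Next I would insert $(\Delta+q)u=0$ into the estimate of Theorem~\ref{mt1.1}, which annihilates the first term on the right-hand side. Because $u$ vanishes of infinite order at $0$, we have $u\in\bigcap_{m\in\mathbb{N}}O_m$, so $\varrho_m(u)<\infty$ for every $m$ and the estimate is available for all $m$ simultaneously. Fixing $\lambda\in\Lambda$ and choosing $m=[\lambda]+4$ makes the exponent $m-(\lambda+3)=[\lambda]+1-\lambda$ strictly positive, so letting $\eta\to 0$ kills the last term and leaves $C\|u\|_{L^2(B_{\underline{\rho}/2})}\le 2^{-(\lambda+1)}\|\Delta(\phi u)\|_{L^p(B)}$. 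Here $\phi u\in W^{2,p}(B)$, so $\|\Delta(\phi u)\|_{L^p(B)}$ is a fixed finite number independent of $\lambda$.

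Finally I would send $\lambda$ to $\infty$ along a sequence $(\lambda_j)\subset\Lambda$, so that the factor $2^{-(\lambda_j+1)}$ forces the right-hand side to $0$. This yields $\|u\|_{L^2(B_{\underline{\rho}/2})}=0$, i.e. $u=0$ on the neighborhood $V:=B_{\underline{\rho}/2}$ of $0$, which is the assertion. I do not expect any real obstacle: every analytic input has already been established, and the argument is structurally identical to the derivation of Corollary~\ref{corollary1}. The only step requiring a moment's care is the reduction to $q\in\underline{Q}$, since the resulting radius $\underline{\rho}$, and hence the neighborhood $V$, depends on $q$; but this is harmless because the statement only claims vanishing on \emph{some} neighborhood of $0$ rather than on a fixed one.
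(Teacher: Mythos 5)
Your proposal is correct and is exactly the argument the paper intends: the paper presents Corollary \ref{corollaryc1} as an immediate consequence of Theorem \ref{mt1.1} via the same limiting procedure ($\eta\to 0$ with $m=[\lambda]+4$, then $\lambda\to\infty$ along $\Lambda$) that it spells out after Theorem \ref{mt1.0}, and your preliminary reduction (taking $q_0=q$ and shrinking $\underline{\rho}$ by absolute continuity of the integral so that $q\in\underline{Q}$) is precisely the role of the setup preceding Theorem \ref{mt1.1}, with the resulting $q$-dependence of $V=B_{\underline{\rho}/2}$ acknowledged by the paper as well.
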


Of course, $V$ in Corollary \ref{corollaryc1} depends on $q$.

\subsection{Quantitative SUCP for differential inequalities}

Here again, the notations are those of the proof of Theorem \ref{mt1.0}.   As at the beginning of the proof of Theorem \ref{mt1.0}, we have
\[
\||x|^{-\lambda}\psi_\eta u\|_{L^{p'}(B_\rho)}\le \kappa \||x|^{-\lambda}\Delta (\psi_\eta u)\|_{L^{p}(B_{\rho})}
+\kappa \rho^{-\lambda}\|\Delta v\|_{L^{p}(B)},
\]
where $\kappa=\kappa(n)$ is the constant in \eqref{1}.

Similarly to \eqref{7}, we prove
\begin{align*}
\||x|^{-\lambda}\Delta (\psi_\eta u)\|_{L^{p}(B_\rho)}\le  &\||x|^{-\lambda}\psi_\eta \Delta u\|_{L^p(B_\rho)} 
\\
&c_0\eta ^{-(\lambda+2)}\left(\|\nabla u\|_{L^2(B_{2\eta})}+\|u\|_{L^2(B_{2\eta})}\right),
\end{align*}
for some constant $c_0=c_0(n)>0$. 

This inequality in the preceding one yields
\begin{align*}
\||x|^{-\lambda}\psi_\eta u\|_{L^{p'}(B_\rho)}\le \kappa  \||x|^{-\lambda}\psi_\eta &\Delta u\|_{L^p(B_\rho)}+\kappa \rho^{-\lambda}\|\Delta v\|_{L^{p}(B)}
\\
&+\kappa c_0\eta ^{-(\lambda+2)}\left(\|\nabla u\|_{L^2(B_{2\eta})}+\|u\|_{L^2(B_{2\eta})}\right).
\end{align*}
Let $P\in L^n(B,\mathbb{C}^n)$. Then, under the assumption
\[
|\Delta u|\le |qu|+|P\cdot \nabla u|\quad \mbox{in}\; B,
\]
the inequality above gives
\begin{align*}
&\||x|^{-\lambda}\psi_\eta u\|_{L^{p'}(B_\rho)}\le \kappa\left(  \||x|^{-\lambda}\psi_\eta q u\|_{L^p(B_\rho)}+\||x|^{-\lambda}\psi_\eta P\cdot\nabla u\|_{L^p(B_\rho)}\right)
\\
&\hskip 2cm +\kappa \rho^{-\lambda}\|\Delta v\|_{L^{p}(B)}+\kappa c_0\eta ^{-(\lambda+2)}\left(\|\nabla u\|_{L^2(B_{2\eta})}+\|u\|_{L^2(B_{2\eta})}\right).
\end{align*}
Since
\begin{align*}
&\||x|^{-\lambda}\psi_\eta q u\|_{L^p(B_\rho)}\le \|q\|_{L^{\frac{n}{2}}(B_\rho)}\||x|^{-\lambda}\psi_\eta  u\|_{L^{p'}(B_\rho)}
\\
&\||x|^{-\lambda}\psi_\eta P\cdot \nabla u\|_{L^p(B_\rho)}\le \|P\|_{L^n(B_\rho,\mathbb{C}^n)}\||x|^{-\lambda}\psi_\eta \nabla u\|_{L^2(B_\rho)},
\end{align*}
if $q\in \underline{Q}$ and $\|P\|_{L^n(B_{\underline{\rho}},\mathbb{C}^n)}\le \mathbf{k}$, for some constant $\mathbf{k}>0$,  we obtain
\begin{align*}
&\||x|^{-\lambda}\psi_\eta u\|_{L^{p'}(B_{\underline{\rho}})}\le 2\kappa (c_0+\mathbf{k})\eta ^{-(\lambda+2)}\|\nabla u\|_{L^2(B_{2\eta})}
\\
&\hskip 3.5cm +2\kappa \rho^{-\lambda}\|\Delta v\|_{L^{p}(B)}+2\kappa c_0\eta ^{-(\lambda+2)}\|u\|_{L^2(B_{2\eta})}.
\end{align*}

Armed with this inequality, we can proceed as in the proof of Theorem \ref{mt1.0} to obtain the following result.

\begin{theorem}\label{mt1.2}
Let $\mathbf{k}>0$, $m\in \mathbb{N}$, $\lambda \in \Lambda$, $q\in \underline{Q}$, $P\in L^n(B,\mathbb{C}^n)$ such that $\|P\|_{L^n(B_{\underline{\rho}},\mathbb{C}^n)}\le \mathbf{k}$. For all  $u\in W^{2,p}(B)\cap O_m$ satisfying $|\Delta u|\le |qu|+|P\cdot \nabla u|$ in $B$ and $0<\eta <  \frac{\underline{\rho}}{4}$, we have 
\[
\mathbf{c}\|u\|_{L^2(B_{\frac{\underline{\rho}}{2}})}\le   2^{-(\lambda+1)}\|\Delta(\phi u)\|_{L^{p}(B)}
 +3^m\eta^{m-(\lambda+3)}\varrho_m(u),
 \]
where $\mathbf{c}=\mathbf{c}(n,\mathbf{k},q_0,\underline{\rho})>0$ is a constant.
\end{theorem}

We observe that the inequality $|\Delta u|\le |qu|+|P\cdot \nabla u|$ in $B$, from Theorem \ref{mt1.2}, can be replaced by the following:
\[
c|\Delta u|\le |x|^{-2+\alpha_0}|u|+|x|^{-1+\alpha_1}|\nabla u|\quad \mbox{in}\; B,
\]
where $c>0$, $\alpha_j>0$, $j=0,1$, are constants. This follows easily by observing that $|x|^{-2+\alpha_0}\in L^{\frac{n}{2}}(B)$ and $|x|^{-1+\alpha_1}\in L^n(B)$.

The corollary below follows from Theorem \ref{mt1.2}.

\begin{corollary}\label{corollaryc2}
Let $q\in L^{\frac{n}{2}}(B)$ and $P\in L^n(B,\mathbb{C}^n)$. If $u\in W^{2,p}(B)$ is such that $|\Delta u|\le |qu|+|P\cdot \nabla u|$ in $B$ and $u$ vanishes of infinite order at $0$, then $u=0$ in a neighborhood $V$ of $0$.
\end{corollary}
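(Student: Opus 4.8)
The plan is to deduce Corollary \ref{corollaryc2} directly from Theorem \ref{mt1.2}, mirroring the argument already used to pass from Theorem \ref{mt1.0} to Corollary \ref{corollary1}. The essential point is that the smallness hypotheses required by Theorem \ref{mt1.2} are purely \emph{local} near the origin, so an arbitrary $q\in L^{n/2}(B)$ and $P\in L^n(B)$ can be accommodated by shrinking the radius $\underline{\rho}$.

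First I would fix the given $q\in L^{n/2}(B)$ and $P\in L^n(B)$. Since $q\in L^{n/2}(B)$, the map $r\mapsto \|q\|_{L^{n/2}(B_r)}$ tends to $0$ as $r\to 0$ by absolute continuity of the integral; hence I can choose $0<\underline{\rho}<1/2$ so small that $\sigma^2\|q\|_{L^{n/2}(B_{\underline{\rho}})}<1/4$, which places $q$ in the class $\underline{Q}$ (taking $q_0=q$, so that the defining inequality for $\underline{Q}$ reads $\sigma^2\|q-q\|_{L^{n/2}(B_{\underline{\rho}})}=0<1/4$ and is trivially satisfied, while \eqref{c1} gives the needed bound $\sigma^2\|q\|_{L^{n/2}(B_{\underline{\rho}})}<1/2$). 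Simultaneously, since $P\in L^n(B)$, the quantity $\|P\|_{L^n(B_{\underline{\rho}})}$ is finite, so I may set $K:=\|P\|_{L^n(B_{\underline{\rho}})}$; then the hypothesis $\|P\|_{L^n(B_{\underline{\rho}})}\le K$ of Theorem \ref{mt1.2} holds automatically. With these choices, Theorem \ref{mt1.2} applies to $u$ on $B_{\underline{\rho}}$.

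Next I would run the limiting argument. For every $\lambda\in\Lambda$ and $0<\eta<\underline{\rho}/4$, choosing $m=[\lambda]+4$ in the conclusion of Theorem \ref{mt1.2} gives
\[
C\|u\|_{L^2(B_{\underline{\rho}/2})}\le 2^{-(\lambda+1)}\|\Delta(\phi u)\|_{L^{p}(B)}+3^{[\lambda]+4}\eta^{[\lambda]+1-\lambda}\varrho_{[\lambda]+4}(u).
\]
Because $u$ vanishes of infinite order at $0$ we have $\varrho_{[\lambda]+4}(u)<\infty$, and the exponent $[\lambda]+1-\lambda$ is strictly positive, so letting $\eta\to 0$ kills the last term and leaves
\[
C\|u\|_{L^2(B_{\underline{\rho}/2})}\le 2^{-(\lambda+1)}\|\Delta(\phi u)\|_{L^{p}(B)}.
\]
Taking a sequence $\lambda_j\in\Lambda$ with $\lambda_j\to\infty$ forces the right-hand side to $0$, whence $u=0$ on $B_{\underline{\rho}/2}$. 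Setting $V:=B_{\underline{\rho}/2}$ gives the desired neighborhood of $0$.

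I do not anticipate a genuine obstacle here, since the structure copies the deduction of Corollary \ref{corollary1}; the only point deserving care is the verification that an arbitrary $q$ and $P$ can be fit into the hypotheses of Theorem \ref{mt1.2} after shrinking $\underline{\rho}$, which is exactly the content of the absolute-continuity argument above. The mild subtlety is that the neighborhood $V$ produced is not universal but depends on $q$ (and on $P$), precisely because $\underline{\rho}$ is selected in terms of the decay of $\|q\|_{L^{n/2}(B_r)}$ and the finiteness of $\|P\|_{L^n(B_r)}$; this dependence is expected and matches the remark following Corollary \ref{corollaryc1}.
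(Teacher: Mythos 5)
Your proof is correct and follows exactly the route the paper intends: the deduction of Corollary \ref{corollaryc2} from Theorem \ref{mt1.2} is left implicit there, but it is precisely the combination you give of (i) shrinking $\underline{\rho}$ by absolute continuity of the integral so that $q\in\underline{Q}$ (with $q_0=q$) and $\|P\|_{L^n(B_{\underline{\rho}})}\le K$, and (ii) the limiting argument $m=[\lambda]+4$, $\eta\to 0$, then $\lambda\to\infty$ along a sequence in $\Lambda$, already used to pass from Theorem \ref{mt1.0} to Corollary \ref{corollary1}. Your closing remark that $V=B_{\underline{\rho}/2}$ depends on $q$ (and $P$) matches the paper's own comment after Corollary \ref{corollaryc1}.
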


Corollary \ref{corollaryc2} with $P=0$ corresponds to \cite[Theorem 6.3]{JK}. We also note that Corollary \ref{corollaryc2} does not hold in general when $n=2$ (see \cite[Theorem 2]{Wo1}).

\subsection{Quantitative SUCP for an arbitrary bounded domain}

Let $\Omega$ be a bounded domain of $\mathbb{R}^n$, $x_0 \in \Omega$, and $d := \mathrm{dist}(x_0, \mathbb{R}^n \setminus \Omega)$. Let $B_r(x_0)$ denote the open ball centered at $x_0$ with radius $r > 0$, and set
\begin{align*}
&\tilde{Q}:=\left\{\tilde{q}\in L^{\frac{n}{2}}(\Omega);\; \|\tilde{q}\|_{L^2(B_d(x_0))}\le \varsigma\right\},
\\
&\tilde{\varrho}_m(\tilde{u}):=\sup \left\{ \eta^{-m}\|\tilde{u}\|_{L^2(B_\eta (x_0))};\; 0<\eta <d\right\},\quad \tilde{u}\in L^2(\Omega),\; m\in \mathbb{N},
\\
& \tilde{O}_m:=\{\tilde{u}\in L^2(\Omega);\; \tilde{\varrho}_m(\tilde{u})<\infty\}.
\end{align*}
For $m\in \mathbb{N}$, $\tilde{u}\in W^{2,p}(\Omega)\cap  \tilde{O}_m$ and $\tilde{q}\in \tilde{Q}$, define
\[
u(x)=\tilde{u}(x_0+dx),\quad q(x)=d^2\tilde{q}(x_0+dx),\quad x\in B.
\]
We verify that $u\in W^{2,p}(B)\cap O_m$, $q\in Q$,  with
\[
\|q\|_{L^{\frac{n}{2}}(B)}=\|\tilde{q}\|_{L^{\frac{n}{2}}(B_d(x_0))},\quad \varrho_m(u)=d^{m-n/2}\tilde{\varrho}_m(\tilde{u})
\]
and
\[
(\Delta +q)u=d^2(\Delta +\tilde{q}(x_0+d\, \cdot ))\tilde{u}(x_0+d\, \cdot )\quad \mbox{in}\; B.
\]
Let $0<\eta <\frac{\rho}{4}$. Applying Theorem \ref{mt1.0},  we get
\begin{align*}
&\mathbf{c}\|u\|_{L^2(B_{\frac{\rho}{2}})}\le   \eta^{-\lambda}\|(\Delta+q) u\|_{L^p(B)}
\\
&\hskip 3cm + 2^{-(\lambda+1)}\|(\Delta+q) (\phi u)\|_{L^{p}(B)} +3^m\eta^{m-(\lambda+3)}\varrho_m(u).
\end{align*}
Here and henceforth, $\mathbf{c}=\mathbf{c}(n,r_0,\vartheta)>0$ denotes a generic constant. Therefore, we obtain
\begin{align*}
&\mathbf{c}d^{-\frac{n}{2}}\|\tilde{u}\|_{L^2(B_{\frac{d\rho}{2}})}\le   d^{2-\frac{n}{p}}\eta^{-\lambda}\|(\Delta+q) \tilde{u}\|_{L^p(B_d(x_0))}
\\
&\hskip 1.5cm + 2^{-(\lambda+1)}d^{2-\frac{n}{p}}\|(\Delta+\tilde{q}) (\tilde{\phi} \tilde{u})\|_{L^{p}(B_d(x_0))} +3^md^{m-\frac{n}{2}}\eta^{m-(\lambda+3)}\tilde{\varrho}_m(\tilde{u}),
\end{align*}
where $\tilde{\phi}(y)=\phi \left(\frac{y-x_0}{d}\right)$, $y\in B_d(x_0)$. In summary, we have  proved the following theorem.

\begin{theorem}\label{Mt1}
There exists a constant $\mathbf{c}=\mathbf{c}(n,r_0,\vartheta,d)>0$ such that for all $m\in \mathbb{N}$, $\lambda \in \Lambda$, $\tilde{q}\in \tilde{Q}$, $\tilde{u}\in W^{2,p}(\Omega)\cap \tilde{O}_m$ and $0<\eta <  \frac{\rho}{4}$, we have 
\begin{align*}
&\mathbf{c}\|\tilde{u}\|_{L^2(B_{\frac{d\rho}{2}}(x_0))}\le   \eta^{-\lambda}\|(\Delta+\tilde{q}) u\|_{L^p(B_d(x_0))}
\\
&\hskip 3cm + 2^{-(\lambda+1)}\|(\Delta+\tilde{q}) (\tilde{\phi} \tilde{u})\|_{L^{p}(B_d(x_0))}+(3d)^m\eta^{m-(\lambda+3)}\tilde{\varrho}_m(\tilde{u}).
\end{align*}
\end{theorem}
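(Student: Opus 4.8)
The plan is to deduce Theorem~\ref{Mt1} from the unit-ball estimate of Theorem~\ref{mt1.0} by the natural elliptic rescaling carrying the ball $B_d(x_0)$ onto $B$. I would introduce the affine map $x\mapsto x_0+dx$, which sends $B$ onto $B_d(x_0)$, and set
\[
u(x):=\tilde u(x_0+dx),\qquad q(x):=d^2\,\tilde q(x_0+dx),\qquad x\in B .
\]
The weight $d^2$ in the definition of $q$ is forced by the chain rule: since $\Delta_x\big(\tilde u(x_0+d\,\cdot)\big)=d^2(\Delta\tilde u)(x_0+d\,\cdot)$, this choice produces the clean intertwining identity $(\Delta+q)u=d^2\big[(\Delta+\tilde q)\tilde u\big](x_0+d\,\cdot)$, so that the zeroth-order term is transported together with the Laplacian. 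I would also record that, with $\tilde\phi(y)=\phi((y-x_0)/d)$, one has $\phi=\tilde\phi(x_0+d\,\cdot)$, hence $\phi u=(\tilde\phi\tilde u)(x_0+d\,\cdot)$; thus the cutoff term scales by exactly the same rule as the first right-hand term.

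Next I would check that the rescaled data satisfy the hypotheses of Theorem~\ref{mt1.0}. The conceptual heart of the argument is the scale invariance of the \emph{critical} Lebesgue norm: a change of variables, in which the Jacobian factor $d^n$ is exactly cancelled by the $d^2$ weight raised to the power $n/2$, gives $\|q\|_{L^{n/2}(B)}=\|\tilde q\|_{L^{n/2}(B_d(x_0))}$. Consequently the constraint $\tilde q\in\tilde Q$ forces $q\in Q$, and \emph{no} smallness assumption on $d$ is needed at this point; this is precisely what allows the constant in Theorem~\ref{mt1.0} to be reused verbatim. In the same spirit I would compute $\|u\|_{L^2(B_\eta)}=d^{-n/2}\|\tilde u\|_{L^2(B_{d\eta}(x_0))}$, and after taking the supremum in the definition of $\varrho_m$ (substituting $\eta\mapsto d\eta$) this yields $\varrho_m(u)=d^{m-n/2}\tilde\varrho_m(\tilde u)$. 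In particular $u\in W^{2,p}(B)\cap O_m$, so all the membership requirements are met.

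With these identities in hand I would apply inequality~\eqref{mat1.0} to the pair $(u,q)$ and undo the change of variables term by term. Each $L^p$-norm of $(\Delta+q)(\cdot)$ acquires a factor $d^{2-n/p}$ (the $d^2$ from the operator and the $d^{-n/p}$ from the Jacobian), the left-hand $L^2$-norm acquires $d^{-n/2}$, and the last term carries $d^{m-n/2}$ coming from $\varrho_m$. Using $p=2n/(n+2)$, i.e.\ $n/p=(n+2)/2$, these exponents simplify: $2-n/p=(2-n)/2$ and $2-n/p+n/2=1$. After normalizing so that the left-hand side reads $C\|\tilde u\|_{L^2(B_{d\rho/2}(x_0))}$ and absorbing the fixed, $m$-independent powers of $d$ into the constant $C=C(n,\vartheta,d)$, the only genuinely $m$-dependent surviving factor is $3^m d^m=(3d)^m$ in the last term, which reproduces the claimed inequality.

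The step I expect to require the most care is the bookkeeping of the powers of $d$ across the three inhomogeneous pieces, since the $L^2$, weighted-$L^p$, and $\varrho_m$ contributions scale with \emph{different} exponents, and one must verify that a single normalization simultaneously turns the first two right-hand coefficients into precisely $\eta^{-\lambda}$ and $2^{-(\lambda+1)}$ while leaving the $m$-dependent factor in the form $(3d)^m$. A mild subtlety here is that the residual constant factors behave differently according as $d\le 1$ or $d\ge 1$, so one should either treat the two regimes separately or simply weaken the estimate by absorbing the $d$-dependence into $C=C(n,\vartheta,d)$; either way the statement follows. By contrast, the scaling of the function classes and of the operator is routine once the invariance of $L^{n/2}$ is recognized.
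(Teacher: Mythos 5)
Your proposal is correct and follows essentially the same route as the paper: the identical rescaling $u(x)=\tilde u(x_0+dx)$, $q(x)=d^2\tilde q(x_0+dx)$, the scale invariance of the $L^{n/2}$ norm giving $q\in Q$, the identities $\varrho_m(u)=d^{m-n/2}\tilde\varrho_m(\tilde u)$ and $(\Delta+q)u=d^2[(\Delta+\tilde q)\tilde u](x_0+d\,\cdot)$, followed by an application of Theorem \ref{mt1.0} and absorption of the fixed powers of $d$ into the constant. Your exponent bookkeeping ($2-n/p=(2-n)/2$) matches the paper's computation, so nothing further is needed.
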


Theorem \ref{Mt1} together with \cite[Theorem 1.2 and comments in Section 1.3]{Ch} yield the following global quantitative SUCP, where 
\[
\mathbf{\tilde{Q}}:=\left\{\tilde{q}\in L^{n}(\Omega);\; \|\tilde{q}\|_{L^n(B_d(x_0))}\le |B_d(x_0)|^{-\frac{1}{n}}\varsigma \right\}\; (\subset \tilde{Q}).
\]

\begin{theorem}\label{Mt2}
Assume that $\Omega$ is $C^{1,1}$. Let $0<s,t<\frac{1}{2}$. For all $0<r <1$, $0<\eta<\frac{\rho}{4}$, $m\in \mathbb{N}$, $\lambda \in \Lambda$, $\tilde{q}\in \mathbf{\tilde{Q}}$ and $u\in H^2(\Omega)\cap \tilde{O}_m$ we have
\begin{align*}
&\mathbf{c}\|\tilde{u}\|_{H^{\frac{3}{2}+t}(\Omega)}\le r^{\zeta}\|\tilde{u}\|_{H^2(\Omega)} 
\\
&\hskip 1.5cm +e^{\mathfrak{b}r^{-\nu}}\Big(\eta^{-\lambda}\|(\Delta+\tilde{q}) \tilde{u}\|_{L^2(\Omega)} 
\\
&\hskip 4cm + 2^{-(\lambda+1)}\|(\Delta+q) (\tilde{\phi} \tilde{u})\|_{L^{p}(\Omega)}+(3d)^m\eta^{m-(\lambda+3)}\tilde{\varrho}_m(\tilde{u})\Big),
\end{align*}
where $\mathbf{c}=\mathbf{c}(n,s, t, r_0,\vartheta,\Omega, x_0,d)>0$, $\mathfrak{b}=\mathfrak{b}(n, s, t, r_0, \vartheta,\Omega,x_0,d)>0$, $\nu=\nu(n,\Omega)>0$ are constants and $\zeta =\min \left(\frac{\frac{1}{2}-t}{1+t},\frac{s}{4}\right)$.
\end{theorem}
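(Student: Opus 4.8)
The plan is to reproduce, step for step, the argument that upgrades Theorem \ref{mt1.0} into Theorem \ref{mt2}, but now on the general $C^{1,1}$ domain $\Omega$, using the rescaled local estimate Theorem \ref{Mt1} in place of the unit-ball estimate. The two ingredients are: (i) the global quantitative unique continuation estimate \cite[Theorem 1.2 and comments in Section 1.3]{Ch}, which on a $C^{1,1}$ domain controls an interior higher-order norm by the full $H^2$ norm with the small prefactor $r^{\zeta}$, plus an exponentially amplified interior-observation term and source term; and (ii) Theorem \ref{Mt1}, which quantifies the strong unique continuation at $x_0$ and estimates the $L^2$-mass on the fixed interior ball $B_{d\rho/2}(x_0)$. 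First I would take the observation ball in \cite[Theorem 1.2]{Ch} to be exactly $B_{d\rho/2}(x_0)\subset\Omega$ — the ball appearing on the left of Theorem \ref{Mt1} — and apply that estimate to $\tilde{u}\in H^2(\Omega)$ with potential $\tilde{q}\in\mathbf{\tilde{Q}}\subset\tilde{Q}$, obtaining
\[
C\|\tilde{u}\|_{H^{3/2+t}(\Omega)}\le r^{\zeta}\|\tilde{u}\|_{H^2(\Omega)}+e^{\mathfrak{b}r^{-\nu}}\big(\|\tilde{u}\|_{L^2(B_{d\rho/2}(x_0))}+\|(\Delta+\tilde{q})\tilde{u}\|_{L^2(\Omega)}\big),
\]
with $\zeta=\min((1/2-t)/(1+t),s/4)$ and $\nu=\nu(n,\Omega)$ as furnished by \cite{Ch}.

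Next I would insert Theorem \ref{Mt1} to replace $\|\tilde{u}\|_{L^2(B_{d\rho/2}(x_0))}$ by its three data terms $\eta^{-\lambda}\|(\Delta+\tilde{q})\tilde{u}\|_{L^p(B_d(x_0))}$, $2^{-(\lambda+1)}\|(\Delta+\tilde{q})(\tilde{\phi}\tilde{u})\|_{L^p(B_d(x_0))}$ and $(3d)^m\eta^{m-(\lambda+3)}\tilde{\varrho}_m(\tilde{u})$; this is legitimate since $H^2(\Omega)\hookrightarrow W^{2,p}(\Omega)$ on the bounded domain $\Omega$ because $p<2$. The remaining work is pure bookkeeping of exponents and norms. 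The weighted source term $\eta^{-\lambda}\|(\Delta+\tilde{q})\tilde{u}\|_{L^p(B_d(x_0))}$ produced by Theorem \ref{Mt1} is converted to the global form via the bounded-domain embedding $L^2(\Omega)\hookrightarrow L^p(B_d(x_0))$ and absorbed into $e^{\mathfrak{b}r^{-\nu}}\eta^{-\lambda}\|(\Delta+\tilde{q})\tilde{u}\|_{L^2(\Omega)}$ (after enlarging $\mathfrak{b}$ and $C$). For the middle term, $\tilde{\phi}(y)=\phi((y-x_0)/d)$ is supported in $B_d(x_0)$, so $\tilde{\phi}\tilde{u}$ is supported there and $\|(\Delta+\tilde{q})(\tilde{\phi}\tilde{u})\|_{L^p(B_d(x_0))}=\|(\Delta+\tilde{q})(\tilde{\phi}\tilde{u})\|_{L^p(\Omega)}$, matching the displayed form. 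Collecting all constants, which depend only on $n,s,t,\vartheta,\Omega,x_0,d$, yields precisely the announced inequality.

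The \emph{main obstacle} is not a computation but the precise invocation of \cite[Theorem 1.2]{Ch}: I must verify that its statement applies with observation on the fixed interior ball $B_{d\rho/2}(x_0)$, that it tolerates a nonzero source $(\Delta+\tilde{q})\tilde{u}$ with exactly the $e^{\mathfrak{b}r^{-\nu}}$ amplification and the claimed dependence of $\nu$ on $(n,\Omega)$ and of $\mathfrak{b},C$ on $(n,s,t,\vartheta,\Omega,x_0,d)$, and that the $C^{1,1}$ regularity of $\Omega$ is exactly what underwrites the boundary Carleman/propagation-of-smallness step and the elliptic regularity needed to pass from $H^2$ to $H^{3/2+t}$. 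This is what is signalled by the reference to the comments in Section 1.3 of \cite{Ch}. A related delicate point is the requirement $\tilde{q}\in\mathbf{\tilde{Q}}$, i.e. $\tilde{q}\in L^n$ rather than merely $L^{n/2}$, which is what places $\Delta+\tilde{q}$ inside the framework of \cite{Ch}; once these compatibility checks are secured, the remainder is the mechanical chaining described above.
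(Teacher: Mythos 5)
Your proposal is correct and follows exactly the route the paper intends: the paper gives no detailed proof of Theorem \ref{Mt2} beyond stating that it follows from combining Theorem \ref{Mt1} with the global quantitative unique continuation estimate of \cite[Theorem 1.2 and Section 1.3]{Ch}, which is precisely the chaining you describe (observation on $B_{d\rho/2}(x_0)$, substitution of the local SUCP bound, and the bounded-domain embeddings $H^2\hookrightarrow W^{2,p}$ and $L^2\hookrightarrow L^p$). Your identification of the delicate points --- the applicability of \cite{Ch} with a nonzero source, the $C^{1,1}$ regularity, and the need for $\tilde{q}\in L^n$ rather than $L^{n/2}$ --- matches what the paper leaves implicit.
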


\appendix
\section{Proof of Lemma \ref{lemma1}} \label{appendixA}

Let $u\in C_0^\infty(\dot{B})$, $\lambda \in \Lambda$ and $q\in L^{\frac{n}{2}}(B)$ satisfying
\begin{equation}\label{2}
2\kappa \|q\|_{L^{\frac{n}{2}}(B)}\le 1.
\end{equation} 
According to \eqref{1}, we have
\begin{align*}
\||x|^{-\lambda}u\|_{L^{p'}(B)}&\le \kappa \||x|^{-\lambda}(\Delta +q) u\|_{L^p(B)} +\kappa\|q|x|^{-\lambda}u\|_{L^p(B)}
\\
&\le \kappa \||x|^{-\lambda}(\Delta +q) u\|_{L^p(B)} +\kappa\|q\|_{L^{\frac{n}{2}}(B)}\||x|^{-\lambda}u\|_{L^{p'}(B)},
\end{align*}
which, combined with \eqref{2}, gives
\begin{equation}\label{3}
\||x|^{-\lambda}u\|_{L^{p'}(B)}\le 2\kappa \||x|^{-\lambda}(\Delta +q)u\|_{L^p(B)}.
\end{equation}

Let $\mathbf{R}$ be the set consisting of the constants $r>0$ for which there exists $\kappa_r>0$ such that for all $u\in C_0^\infty(\dot{B})$, $\lambda \in \Lambda$ and $q\in \mathbf{B}_r$, we have
\begin{equation}\label{3.0}
\||x|^{-\lambda}u\|_{L^{p'}(B)}\le \kappa_r \||x|^{-\lambda}(\Delta +q)u\|_{L^p(B)}.
\end{equation}

By \eqref{3}, $(0,(2\kappa)^{-1}]\subset \mathbf{R}$. Furthermore, we verify that if $0<r'<r$ and $r\in \mathbf{R}$, then $r'\in \mathbf{R}$. That is $R$ is an interval. We now show that $R=]0,r_\ast[$, where $r_\ast=\sup R$. For this, let $r\in \mathbf{R}$ and $r_1=r+ (2\kappa_r)^{-1}$. For all $u\in C_0^\infty(\dot{B})$, $\lambda \in \Lambda$ and  $q\in \mathbf{B}_{r_1}$, using 
\[
q=\frac{r}{r_1}q+\frac{(2\kappa_r)^{-1}}{r_1}q,
\]
and proceeding as for \eqref{3},  we obtain from \eqref{3.0}
\[
\||x|^{-\lambda}u\|_{L^{p'}(B)}\le 2\kappa_r \||x|^{-\lambda}(\Delta +q)u\|_{L^p(B)}.
\]
That is $r_1\in \mathbf{R}$ and hence $\mathbf{R}$ is an open interval. The proof is complete.

%\subsection*{Conflict of Interest statement}The author declares that there is no conflict of interest regarding the content of this work.

%\subsection*{Data availability statement} There is no data associated with this work.

\end{document}